\newtheorem{theorem}{Theorem}[section]
\newtheorem{question}[theorem]{Question}
\newtheorem{lemma}[theorem]{Lemma} 
\newtheorem{proposition}[theorem]{Proposition} 
\newtheorem{thmletter}{Theorem}
\newtheorem{corollary}[theorem]{Corollary} 
\newtheoremstyle{example}{3pt}{3pt}{}{10pt}{\itshape}{:}{.5em}{}
\theoremstyle{example}
\newcommand{\p}[1]{\noindent {\newline\bf #1.}}
\newcommand{\aut}{\operatorname{Aut}}
\title[Residual finiteness of certain HNN-extensions]
{The residual finiteness of (hyperbolic) automorphism-induced HNN-extensions}
\author{Alan D. Logan}%
\date{\today}
\address{School of Mathematics and Statistics\\ University of Glasgow\\ Glasgow, G12 8QW, UK.}
\email{Alan.Logan@glasgow.ac.uk}
\subjclass[2010]{20E06, 20E26, 20F67}
\keywords{HNN-extensions, Residual finiteness, Hyperbolic groups}
\begin{document}
\maketitle

\begin{abstract}
We classify finitely generated, residually finite automorphism-induced HNN-extensions in terms of the residual separability of a single associated subgroup. This classification provides a method to construct automorphism-induced HNN-extensions which are not residually finite. We prove that this method can never yield a ``new'' counter-example to Gromov's conjecture on the residual finiteness of hyperbolic groups.
\end{abstract}

\section{Introduction}
\label{introduction}



A group $H\ast_{(K, \phi)}$ is called an \emph{automorphism-induced HNN-extension} if it has a relative presentation of the form
\[
H\ast_{(K, \phi)}=\langle H, t; tkt^{-1}=\phi(k), k\in K\rangle
\]
where $\phi\in\aut(H)$ and $K\lneq H$.

The main result of this note is a classification of finitely generated, residually finite automorphism-induced HNN-extensions. A subgroup $K$ of $H$ is \emph{residually separable} in $H$ if for all $x\in H\setminus K$ there exists a finite index, normal subgroup $N$ of $H$, written $N\lhd_f H$, such that $x\not\in KN$ (hence if $\varphi_x: H\rightarrow H/N$ is the natural map then $\varphi_x(x)\not\in\varphi_x(K)$).

\begin{thmletter}
\label{thm:RF}
Suppose that $H$ is finitely generated. Then $G=H\ast_{(K, \phi)}$ is residually finite if and only if $H$ is residually finite and $K$ is residually separable in $H$.
\end{thmletter}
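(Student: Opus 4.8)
The plan is to prove the two implications separately. Throughout I will use Britton's Lemma for $G$ and the standard identity $K=H\cap t^{-1}Ht$ (valid here because $K,\phi(K)\le H$ and $\phi|_{K}\colon K\to\phi(K)$ is an isomorphism), together with one structural device for the forward implication: since $H$ is finitely generated it has only finitely many subgroups of each finite index, so for every $N\lhd_{f}H$ the intersection $\bigcap_{j\in\mathbb Z}\phi^{j}(N)$ is again of finite index, is contained in $N$, and is $\phi$-invariant; hence $\phi$ descends to an automorphism $\bar\phi$ of the \emph{finite} group $\bar H:=H/\bigcap_{j}\phi^{j}(N)$. The point of this device is that an arbitrary finite quotient of $H$ need not be compatible with the defining relations $tkt^{-1}=\phi(k)$ and so does not yield a finite quotient of $G$; this incompatibility is the main obstacle, and passing to a $\phi$-invariant finite-index subgroup is how I intend to get around it.

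\emph{If $H$ is residually finite and $K$ is residually separable, then $G$ is residually finite.} Let $\theta\colon G\to\mathbb Z$ be the homomorphism with $\theta(H)=0$, $\theta(t)=1$. Given $g\in G\setminus\{1\}$: if $\theta(g)\ne0$ we are done via $\theta$, so assume $\theta(g)=0$. I will choose $N\lhd_{f}H$ below, form $\bar H$ and $\bar\phi$ as above, let $q\colon H\to\bar H$ be the quotient map, and consider the HNN-extension of the finite group $\bar H$
\[
L=\langle\, \bar H,\ u\ \mid\ u\,q(k)\,u^{-1}=q(\phi(k)),\ k\in K\,\rangle ,
\]
which is residually finite (for instance, it is virtually free), together with the homomorphism $G\to L$ sending $h\mapsto q(h)$ and $t\mapsto u$; this is well-defined since $q(\phi(k))=\bar\phi(q(k))$. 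It suffices to pick $N$ so that $g$ has nontrivial image $\bar g$ in $L$. If $g\in H$, pick $N$ with $g\notin N$ (possible as $H$ is residually finite); then $g\notin\bigcap_{j}\phi^{j}(N)$, so $\bar g=q(g)\ne1$ in $\bar H\le L$. If $g\notin H$, write $g=h_{0}t^{\varepsilon_{1}}h_{1}\cdots t^{\varepsilon_{n}}h_{n}$ in reduced form with $n\ge1$; reducedness says $h_{i}\notin K$ whenever $(\varepsilon_{i},\varepsilon_{i+1})=(1,-1)$ and $h_{i}\notin\phi(K)$ whenever $(\varepsilon_{i},\varepsilon_{i+1})=(-1,1)$. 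Since $K$ is residually separable and $\phi\in\aut(H)$, so is $\phi(K)$; so for each such $i$ choose $N_{i}\lhd_{f}H$ separating $h_{i}$ from the relevant subgroup, and set $N=\bigcap_{i}N_{i}$. Then in $\bar H$ one has $q(h_{i})\notin q(K)$, resp.\ $q(h_{i})\notin q(\phi(K))=\bar\phi(q(K))$, for the relevant $i$, so the image word $q(h_{0})u^{\varepsilon_{1}}\cdots u^{\varepsilon_{n}}q(h_{n})$ is reduced in $L$ with $n\ge1$, whence $\bar g\ne1$ by Britton's Lemma. In all cases, composing $G\to L$ with a finite quotient of $L$ separating $\bar g$ from $1$ separates $g$ from $1$.

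\emph{If $G$ is residually finite, then $H$ is residually finite and $K$ is residually separable.} Residual finiteness of $H$ is inherited from the subgroup $H\le G$. Suppose $K$ is not residually separable, so there is $x\in H\setminus K$ with $x\in KM$ for every $M\lhd_{f}H$. Since $x\notin K=H\cap t^{-1}Ht$, the element $w:=txt^{-1}\phi(x)^{-1}$ is nontrivial in $G$ (if $w=1$ then $txt^{-1}=\phi(x)\in H$, forcing $x\in K$). I claim $\rho(w)=1$ for every homomorphism $\rho$ from $G$ to a finite group $Q$, contradicting residual finiteness of $G$. Given $\rho$, put $N=\ker(\rho|_{H})\lhd_{f}H$ and $M=N\cap\phi^{-1}(N)\lhd_{f}H$, and write $x=km$ with $k\in K$, $m\in M$. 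From $k^{-1}x=m\in N$ we get $\rho(x)=\rho(k)$; from $k^{-1}x=m\in\phi^{-1}(N)$ we get $\phi(k)^{-1}\phi(x)=\phi(k^{-1}x)\in N$, hence $\rho(\phi(x))=\rho(\phi(k))$. Therefore
\[
\rho(w)=\rho(t)\rho(x)\rho(t)^{-1}\rho(\phi(x))^{-1}=\rho(t)\rho(k)\rho(t)^{-1}\rho(\phi(k))^{-1}=\rho(tkt^{-1})\rho(\phi(k))^{-1}=1 ,
\]
proving the claim. (Note this half does not use finite generation of $H$.)

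The step I expect to be most delicate is the $\theta(g)=0$, $g\notin H$ case of the forward implication: translating the separability hypothesis on $K$ into nontriviality of $\bar g$ requires a finite quotient of $G$ (not merely of $H$) in which conjugation by the image of $t$ implements $\phi$ on $q(K)$ while deliberately failing to do so on the syllables $h_{i}$, and the construction of $L$ over the $\phi$-invariant finite quotient $\bar H$, combined with Britton's Lemma, is designed to achieve exactly this. In the converse direction the analogous difficulty — that $\phi$ does not descend to $H/N$ — is resolved by the small trick of shrinking $N$ to $N\cap\phi^{-1}(N)$, which simultaneously forces $\rho(x)=\rho(k)$ and $\rho(\phi(x))=\rho(\phi(k))$.
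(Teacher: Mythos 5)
Your proposal is correct. For the converse direction it is essentially the paper's argument: the paper also isolates the element $txt^{-1}\phi(x)^{-1}$ and shows it lies in every $\overline{N}\lhd_f G$ by passing to a $\phi$-compatible finite-index subgroup of $H$ and replacing $x$ by a suitable $k\in K$. The one genuine difference there is that the paper intersects the whole orbit $\bigcap_{i\in\mathbb{Z}}\phi^i(\overline{N}\cap H)$, which is where it invokes finite generation of $H$, whereas your $M=N\cap\phi^{-1}(N)$ is always of finite index; so your version correctly shows, as you note, that this implication needs no finite generation hypothesis --- a small but real sharpening. For the forward direction the paper simply cites Baumslag--Tretkoff (their Lemma 4.4 on residual finiteness of HNN-extensions); what you have written is, in effect, a self-contained proof of the relevant special case of that lemma: pass to the $\phi$-invariant finite-index kernel $\bigcap_j\phi^j(N)$ (this is where finite generation is genuinely used), form the induced HNN-extension $L$ of the finite quotient, check via Britton's Lemma that a reduced word stays reduced once each offending syllable is separated from $K$ or $\phi(K)$, and finish using residual finiteness of the virtually free group $L$. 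This is the standard mechanism behind the citation, your verification that $\bar\phi$ restricts to an isomorphism $q(K)\to q(\phi(K))$ (so that $L$ really is an HNN-extension) is the key compatibility point and you handle it correctly, and the degenerate case with no sign changes is covered by your preliminary reduction via $\theta$. So the proposal buys self-containedness and a slightly more general converse, at the cost of length; the paper buys brevity by outsourcing the easy direction to the literature.
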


We prove two corollaries of Theorem~\ref{thm:RF}. These corollaries can be easily applied to construct automorphism-induced HNN-extensions which are not residually finite. Both corollaries relate to the subgroup-quotient $N_H(K)/K$. This subgroup-quotient plays a central role in a framework for the construction of groups possessing certain properties and with specified outer automorphism group \cite{logan2015Bass} (see also \cite{LoganNonRecursive} \cite{LoganHNN}).

\begin{corollary}
\label{corol:subgroupQuotient}
Suppose that $H$ is finitely generated. If $N_H(K)/K$ is not residually finite then $G=H\ast_{(K, \phi)}$ is not residually finite.
\end{corollary}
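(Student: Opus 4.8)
The plan is to obtain this as a formal consequence of Theorem~\ref{thm:RF}. By the forward direction of that theorem, if $G$ were residually finite then $H$ would be residually finite and $K$ would be residually separable in $H$. Hence it suffices to establish the implication
\[
\text{($H$ residually finite and $K$ residually separable in $H$)}\implies\text{($N_H(K)/K$ residually finite),}
\]
and then read off the corollary by contraposition together with Theorem~\ref{thm:RF}. (Recall $K\lhd N_H(K)$, so the quotient is defined; if $N_H(K)=K$ it is trivial and there is nothing to prove, so the hypothesis of the corollary is vacuous in that case.)

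To prove the implication, write $L=N_H(K)$ and let $\bar{x}=xK$ be an arbitrary nontrivial element of $L/K$, so that $x\in L\setminus K\subseteq H\setminus K$. Since $K$ is residually separable in $H$, choose $N\lhd_f H$ with $x\notin KN$, and set $\bar{N}=(N\cap L)K/K$. Then $\bar{N}$ is a subgroup of $L/K$ since $N\cap L$ is normal in $L$; it is normal in $L/K$ since both $N\cap L$ and $K$ are normal in $L$; it has finite index in $L/K$ since $N\cap L$ already has finite index in $L$; and $\bar{x}\notin\bar{N}$, because $(N\cap L)K\subseteq NK=KN$ and $x\notin KN$. Thus every nontrivial element of $L/K$ survives in some finite quotient, i.e. $N_H(K)/K=L/K$ is residually finite, proving the implication.

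Finally, assuming as in the corollary that $N_H(K)/K$ is not residually finite, the displayed implication forces $H$ to be non-residually-finite or $K$ to be non-residually-separable in $H$; in either case Theorem~\ref{thm:RF} yields that $G=H\ast_{(K,\phi)}$ is not residually finite.

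I do not expect a genuine obstacle here: once Theorem~\ref{thm:RF} is in hand this is essentially a one-line diagram chase. The only points requiring care are the elementary bookkeeping in the last clause of the middle paragraph --- translating the separation statement ``$x\notin KN$'' in $H$ into ``$\bar{x}\notin\bar{N}$'' in $N_H(K)/K$, which uses normality of $N$ to rewrite $KN=NK$ --- and recording the degenerate case $N_H(K)=K$ noted above, where the corollary's hypothesis cannot hold.
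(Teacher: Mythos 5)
Your proof is correct and takes essentially the same route as the paper: the paper argues directly that non-residual-finiteness of $N_H(K)/K$ yields a witness $x\in N_H(K)\setminus K$ with $x\in(\overline{N}\cap N_H(K))K\subseteq\overline{N}K$ for every $\overline{N}\lhd_f H$, hence $K$ is not residually separable, while you run the identical correspondence $N\mapsto (N\cap N_H(K))K/K$ in the contrapositive direction to show residual separability forces $N_H(K)/K$ to be residually finite. Both arguments then conclude via Theorem~\ref{thm:RF}, so there is no substantive difference.
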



\begin{corollary}
\label{corol:subgroupQuotient+FI}
Suppose that $H$ is finitely generated and that $N_H(K)$ has finite index in $H$. Then $G=H\ast_{(K, \phi)}$ is residually finite if and only if both $H$ and $N_H(K)/K$ are residually finite.
\end{corollary}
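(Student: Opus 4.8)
The plan is to derive the corollary from Theorem~\ref{thm:RF} and Corollary~\ref{corol:subgroupQuotient}. Throughout I would write $L := N_H(K)$, so that $K \lhd L$, the index $[H:L]$ is finite, and the quotient $L/K = N_H(K)/K$ is a genuine group.

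The \emph{only if} direction is immediate: if $G$ is residually finite then Theorem~\ref{thm:RF} shows $H$ is residually finite, and the contrapositive of Corollary~\ref{corol:subgroupQuotient} shows $L/K$ is residually finite.

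For the \emph{if} direction, assume $H$ and $L/K$ are both residually finite. Since $H$ is finitely generated, Theorem~\ref{thm:RF} reduces the problem to showing that $K$ is residually separable in $H$. So I would fix $x \in H \setminus K$ and look for $M \lhd_f H$ with $x \notin KM$. Let $C := \bigcap_{h \in H} hLh^{-1}$ be the normal core of $L$ in $H$, a finite-index normal subgroup of $H$ contained in $L$. If $x \notin L$, then, because $K$ and $C$ both lie in the subgroup $L$, the product $KC$ lies in $L$, so $x \notin KC$ and I take $M = C$. If instead $x \in L \setminus K$, then the image of $x$ in $L/K$ is nontrivial, so residual finiteness of $L/K$ provides $P \lhd_f L$ with $K \le P$ and $x \notin P$. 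The index $[H:P]$ is finite, so the normal core $Q := \bigcap_{h \in H} hPh^{-1}$ is a finite-index normal subgroup of $H$; since $K \le P$ and $Q \le P$ we get $KQ \subseteq P$, whence $x \notin KQ$, and I take $M = Q$. In either case $x$ is separated from $K$ by a finite-index normal subgroup of $H$, so $K$ is residually separable in $H$ and Theorem~\ref{thm:RF} yields the conclusion.

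The only step with genuine content is the passage in the second case from a subgroup normal in $L$ to one normal in $H$ that still avoids $x$; this succeeds precisely because enlarging $Q$ to $KQ$ keeps us inside $P$, which was chosen to miss $x$. Everything else is routine finite-index bookkeeping, so I do not anticipate a serious obstacle.
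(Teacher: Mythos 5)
Your proposal is correct and follows essentially the same route as the paper: reduce via Theorem~\ref{thm:RF} and Corollary~\ref{corol:subgroupQuotient} to showing $K$ is residually separable, then split into the cases $x\notin N_H(K)$ (handled by the normal core of $N_H(K)$) and $x\in N_H(K)\setminus K$ (handled by pulling back a finite quotient of $N_H(K)/K$ and passing to a finite-index normal subgroup of $H$ inside the kernel). The only cosmetic difference is that you name the normal core explicitly where the paper merely asserts the existence of a suitable $N\lhd_f H$.
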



\p{Hyperbolicity}
It is a famous conjecture of Gromov that all hyperbolic groups are residually finite \cite{niblo1991problems} \cite{kapovich2000equivalence} \cite{Olshanskii2000BassLubotzky}. One might hope to apply Corollary \ref{corol:subgroupQuotient} to obtain a counter-example to this conjecture.
However, Theorem \ref{thm:hyperbolicity} proves that Corollary \ref{corol:subgroupQuotient} can produce no ``new'' counter-examples to Gromov's conjecture, in the sense that if $G=H\ast_{(K, \phi)}$ is a counter-example where the subgroup-quotient $N_H(K)/K$ is used to force $G$ to be non-residually finite then the conditions of Theorem \ref{thm:hyperbolicity} hold, and so $H$ is also a counter-example.

\begin{thmletter}
\label{thm:hyperbolicity}
Suppose that $G=H\ast_{(K, \phi)}$ is hyperbolic and non-residually finite, and that $K\lneq N_H(K)$. Then $K$ is finite, and $H$ is hyperbolic and non-residually finite.
\end{thmletter}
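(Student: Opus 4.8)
The plan is to prove this in three stages: first that $K$ must be finite, then that $H$ is hyperbolic, and finally --- using Theorem~\ref{thm:RF} --- that $H$ is not residually finite.

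\emph{Stage 1: $K$ is finite.} I would argue by contradiction, assuming $K$ is infinite. Since $K\lneq N_H(K)$, fix $h\in N_H(K)\setminus K$; then $hKh^{-1}=K$, so $K\cap hKh^{-1}=K$ is infinite while $h\notin K$, i.e.\ $K$ fails to be almost malnormal in $H$. On the other hand, $G=H\ast_{(K,\phi)}$ is a \emph{hyperbolic} HNN-extension whose associated subgroups are $K$ and $\phi(K)$, and the structure theory of hyperbolic graphs of groups (the converse direction of the Bestvina--Feighn combination theorem, together with the quasiconvexity and almost-malnormality of edge groups) forces an infinite edge group of a hyperbolic HNN-extension to be quasiconvex and almost malnormal in the base group; this contradicts the previous sentence. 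Hence $K$ is finite. I expect this stage to be the crux: the delicate point is to invoke --- or, if no sufficiently clean statement is available, to establish directly --- almost malnormality of $K$ in $H$ without presupposing that $H$ is itself hyperbolic. As a fallback I would work inside the Bass--Serre tree $T$ of the splitting: an infinite subgroup of the hyperbolic group $G$ contains a loxodromic element $g$, this $g$ lies in $K$ and therefore fixes both $e$ and the distinct edge $h\cdot e$ of $T$ incident to the same $H$-vertex, and I would propagate this configuration --- using that $\phi$ is an automorphism of $H$ and that $g$ is loxodromic in the Cayley graph of $G$ --- to a contradiction with hyperbolicity.

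\emph{Stage 2: $H$ is hyperbolic.} With $K$ now finite, $H$ is a vertex group of a decomposition of the hyperbolic group $G$ as a graph of groups with finite edge group, so $H$ is quasiconvex in $G$ and hence hyperbolic; this is standard.

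\emph{Stage 3: $H$ is not residually finite.} Suppose instead that $H$ were residually finite. Because $K$ is finite, $K$ would then be residually separable in $H$: given $x\in H\setminus K$ the finite set $\{k^{-1}x: k\in K\}$ contains no identity element, so residual finiteness of $H$ yields $N\lhd_f H$ meeting none of these elements, and this says exactly that $x\notin KN$. Theorem~\ref{thm:RF} would then give that $G$ is residually finite, contrary to hypothesis. Therefore $H$ is not residually finite, which completes the proof.
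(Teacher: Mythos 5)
Stages 2 and 3 of your plan are sound: Stage 2 is exactly the paper's argument, and Stage 3 (routing through Theorem~\ref{thm:RF} plus the observation that finite subgroups of residually finite groups are residually separable) is a legitimate variant of the paper's appeal to Baumslag--Tretkoff, valid because Stage 2 already gives that $H$ is hyperbolic and hence finitely generated, which Theorem~\ref{thm:RF} requires. The genuine gap is Stage 1, which you rightly flag as the crux but do not close. The ``converse combination theorem'' you invoke is false: hyperbolicity of an HNN-extension does \emph{not} force an infinite edge group to be almost malnormal in the base group. For example, $\langle a,b,t \mid ta^{2}t^{-1}=b\rangle$ is free of rank $2$, hence hyperbolic, yet the associated subgroup $\langle a^{2}\rangle$ of the base $F(a,b)$ satisfies $\langle a^{2}\rangle\cap a\langle a^{2}\rangle a^{-1}=\langle a^{2}\rangle$ with $a\notin\langle a^{2}\rangle$, so it is not almost malnormal. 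Quasiconvexity plus almost malnormality of edge groups is a \emph{sufficient} condition for hyperbolicity (Kharlampovich--Myasnikov, via Bestvina--Feighn); what is necessary is the annuli flare condition, which is strictly weaker. Your Bass--Serre tree fallback ends with ``propagate this configuration to a contradiction,'' which is precisely the content that is missing.

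The paper closes Stage 1 with Theorem~\ref{thm:hyperbolicityZxZ}, which you never use: since $K\lneq N_H(K)$, if $K$ contained an element $k$ of infinite order then $\langle a^{-1}t^{-1}\phi(a)t,\,k\rangle\cong\mathbb{Z}\times\mathbb{Z}$ would embed in $G$ for any $a\in N_H(K)\setminus K$, contradicting hyperbolicity of $G$. Hence $K$ is a torsion subgroup of the hyperbolic group $G$ and is therefore finite. This two-line argument, using only the normalizer hypothesis rather than any structure theory of hyperbolic graphs of groups, is what should replace your Stage 1.
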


Theorem \ref{thm:hyperbolicity} leaves the following question:

\begin{question}
Suppose that $G=H\ast_{(K, \phi)}$ is hyperbolic and non-residually finite. Then is $H$ is hyperbolic and non-residually finite?
\end{question}

We also have the following result:

\begin{thmletter}
\label{thm:hyperbolicityZxZ}
Suppose that $K\lneq N_H(K)$ and that $K$ contains an element of infinite order. Then $\mathbb{Z}\times\mathbb{Z}$ embeds into $G=H\ast_{(K, \phi)}$.
\end{thmletter}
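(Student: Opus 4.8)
The plan is to produce a single explicit element $b\in G$ of infinite order commuting with a chosen infinite-order element $k$ of $K$, and then to check that $\langle k,b\rangle$ admits no nontrivial relation, so it is free abelian of rank two. Concretely, fix $k\in K$ of infinite order (which exists by hypothesis) and fix $h\in N_H(K)\setminus K$ (which exists since $K\lneq N_H(K)$), and set
\[
b:=h^{-1}t^{-1}\phi(h)t\in G.
\]

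First I would verify that $b$ commutes with $k$ by a direct computation using only the defining relations. Conjugating, the central occurrence $t\,k\,t^{-1}$ becomes $\phi(k)$ because $k\in K$, so
\[
bkb^{-1}=h^{-1}t^{-1}\phi(h)\,\phi(k)\,\phi(h)^{-1}t\,h=h^{-1}t^{-1}\phi(hkh^{-1})\,t\,h .
\]
Since $h\in N_H(K)$ we have $hkh^{-1}\in K$, so the defining relation read backwards gives $t^{-1}\phi(hkh^{-1})t=hkh^{-1}$, whence $bkb^{-1}=h^{-1}(hkh^{-1})h=k$.

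Next I would show that $b$ has infinite order and that $b^{m}\notin H$ for all $m\neq 0$. Regard $b$ as the word $h^{-1}\cdot t^{-1}\cdot\phi(h)\cdot t$. Because $\phi\in\aut(H)$, the hypothesis $h\notin K$ is equivalent to $\phi(h)\notin\phi(K)$; combined with $h\notin K$, this shows that neither the internal subword $t^{-1}\phi(h)t$ nor the cyclic subword $t\,h^{-1}\,t^{-1}$ is a pinch, so $b$ is a cyclically reduced word of $t$-length $2$. By Britton's Lemma no cancellation occurs at the junctions when one forms $b^{m}$, so $b^{m}$ is a reduced word of $t$-length $2|m|$; in particular $b^{m}\neq 1$ and $b^{m}\notin H$ for $m\neq 0$ (equivalently, $b$ acts hyperbolically on the Bass--Serre tree of $G$). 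Finally, suppose $k^{n}b^{m}=1$ in $G$. If $m\neq 0$ then $b^{m}=k^{-n}\in H$, contradicting $b^{m}\notin H$; hence $m=0$, and then $k^{n}=1$ forces $n=0$ since $k$ has infinite order. Thus $\langle k,b\rangle$ is abelian with no nontrivial relation among its generators $k,b$, so $\langle k,b\rangle\cong\mathbb{Z}\times\mathbb{Z}$, as required.

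The only genuinely new idea is the choice of $b$; the rest is routine bookkeeping with HNN normal forms. The step I would be most careful with is the verification that $b$ is cyclically reduced (hence of infinite order and not conjugate into $H$), since this is precisely where the hypothesis $h\notin K$ — and nothing weaker — is used, via the equivalence $h\notin K\iff\phi(h)\notin\phi(K)$.
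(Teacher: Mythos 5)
Your proof is correct and follows essentially the same route as the paper: you use the identical witness element (the paper's $W=a^{-1}t^{-1}\phi(a)t$ is your $b$ with $a=h$), the same commutation computation via $hkh^{-1}\in K$, and the same conclusion. The only difference is that you spell out the Britton's Lemma argument that $b$ is cyclically reduced of $t$-length $2$, hence of infinite order with no nonzero power in $H$ — a step the paper simply asserts.
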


Automorphism-induced HNN-extensions can be thought of as ``partial'' mapping tori $H\rtimes_{\phi}\mathbb{Z}$. Theorem \ref{thm:hyperbolicityZxZ} proves that automorphism-induced HNN-extensions of free groups $F_n\ast_{(K, \phi)}$ are not hyperbolic if $K\lneq N_H(K)$, even if the ``full'' mapping torus $F_n\rtimes_{\phi}\mathbb{Z}$ is hyperbolic.

\p{Acknowledgments}
I would like to thank the anonymous referee for their extremely helpful comments.


\section{Residual finiteness}
\label{sec:RFproofs}
We first prove Theorem~\ref{thm:RF}.
Note that for $G$ some group, if $P\lhd_f G$ and $H\leq G$ then $P\cap H\lhd_fH$.
Also note that if $H$ is a finitely generated group, $Q\lhd_fH$ and $\phi\in\aut(H)$ then $\cap_{i\in \mathbb{Z}}\phi^i(Q)\lhd_fH$.

\begin{proof}[Proof of Theorem~\ref{thm:RF}]
Suppose $H$ is residually finite and $K$ is residually separable in $H$. Then $H\ast_{(K, \phi)}$ is residually finite \cite[Lemma 4.4]{BaumslagTretkoff}.

Suppose $H\ast_{(K, \phi)}$ is residually finite. Then $H$ is residually finite, as subgroups of residually finite groups are residually finite. Now, suppose that $K$ is not residually separable in $H$, and let $x\in H\setminus K$ be such that $x\in KN$ for all finite index subgroups $N$ of $H$. Let $\overline{N}\lhd_f H\ast_{(K, \phi)}$ be arbitrary. It is sufficient to prove that $txt^{-1}\phi(x)^{-1}\in \overline{N}$. To see this inclusion, first note that $\overline{N}\cap H\lhd_fH$. Consider $L:=\cap_{i\in\mathbb{Z}}\phi^i\left(\overline{N}\cap H\right)$, and note that $L\lhd_fH$. Then there exists $k\in K$ such that $xk^{-1}, \phi(xk^{-1})\in L$. Thus, $xk^{-1}, \phi(xk^{-1})\in \overline{N}$, and so $x\overline{N}=k\overline{N}$ and $\phi(x)\overline{N}=\phi(k)\overline{N}$. Then:
\[
txt^{-1}\phi(x)^{-1}\overline{N}
=tkt^{-1}\phi(k)^{-1}\overline{N}
=\overline{N}
\]
Hence, $txt^{-1}\phi(x)^{-1}\in\overline{N}$ as required.
\end{proof}

We now prove Corollary~\ref{corol:subgroupQuotient}.

\begin{proof}[Proof of Corollary~\ref{corol:subgroupQuotient}]
Suppose that $N_H(K)/K$ is not residually finite. Then there exists some $x\in N_H(K)$ such that $x\in NK$ for all $N\lhd_fN_H(K)$. Hence, for all $\overline{N}\lhd_f H$ we have that $x\in \left(\overline{N}\cap N_H(K)\right)K$, and so $x\in\overline{N}K$. Therefore, $K$ is not residually separable in $H$, and so $H\ast_{(K, \phi)}$ is not residually finite by Theorem~\ref{thm:RF}.
\end{proof}

We now prove Corollary~\ref{corol:subgroupQuotient+FI}.
We previously proved the analogous result for the groups $H\ast_{(K, 1)}$, so where the inducing automorphism $\phi$ is trivial \cite[Proposition 2.2 ]{LoganNonRecursive}.

\begin{proof}[Proof of Corollary~\ref{corol:subgroupQuotient+FI}]
By Theorem \ref{thm:RF} and Corollary \ref{corol:subgroupQuotient}, it is sufficient to prove that if $H$ and $N_H(K)/K$ are residually finite then $K$ is residually separable. So, suppose that $H$ and $N_H(K)/K$ are residually finite.

Additionally, suppose that $x\not\in N_H(K)$. Clearly $x\not\in N_H(K)K$ as $K\leq N_H(K)$. Then the subgroup $N:=\cap_{h\in H} h^{-1}N_H(K)h$ is a finite index, normal subgroup of $H$ such that $x\not\in NK$, as required.

Suppose that $x\in N_H(K)\setminus K$. Now, as $N_H(K)/K$ is residually finite, there exists a map $\varphi_x: N_H(K)/K\rightarrow A_x$ with $A_x$ finite and $xK\not\in\ker(\varphi_x)$. Therefore, there exists a map $\widetilde{\varphi_x}: N_H(K)\rightarrow A_x$ which factors as $N_H(K)\rightarrow N_H(K)/K\xrightarrow{\varphi_x} A_x$ such that $x\not\in\ker\left(\widetilde{\varphi_x}\right)$.
Then $K\leq \ker\left(\widetilde{\varphi_x}\right)$ so $x\not\in \ker\left(\widetilde{\varphi_x}\right)K$.
As $\ker\left(\widetilde{\varphi_x}\right)\lhd_fN_H(K)\lhd_fH$, there exists $N\lhd_fH$ such that $N\leq \ker\left(\widetilde{\varphi_x}\right)$. As $x\not\in \ker\left(\widetilde{\varphi_x}\right)K$ and $NK\leq \ker\left(\widetilde{\varphi_x}\right)K$ we have that $x\not\in NK$ as required.
\end{proof}


\section{Hyperbolicity}
\label{sec:Hypproofs}
%
%
%
%
%
%
%
%
We first prove Theorem \ref{thm:hyperbolicityZxZ}, as it is applied in the proof of Theorem \ref{thm:hyperbolicity}.
Recall that Theorem \ref{thm:hyperbolicityZxZ} gives a necessary condition for $\mathbb{Z}\times\mathbb{Z}$ to embed into $G=H\ast_{(K, \phi)}$. As $\mathbb{Z}\times\mathbb{Z}$ does not embed into any hyperbolic group, Theorem \ref{thm:hyperbolicityZxZ} gives a necessary condition for the hyperbolicity of automorphism-induced HNN-extensions.

\begin{proof}[Proof of Theorem \ref{thm:hyperbolicityZxZ}]
Consider an element $k\in K$ of infinite order, and consider $a\in N_H(K)\setminus K$. Then the word $W=a^{-1}t^{-1}\phi(a)t$ has infinite order in $G$, and indeed no power of $W$ is contained in $K$. Now, as $aka^{-1}\in K$ we have that $t^{-1}\phi(aka^{-1})=aka^{-1}t^{-1}$. Then $W$ and $k$ commute as follows:
\begin{align*}
a^{-1}t^{-1}\phi(a)t\cdot k
&=a^{-1}t^{-1}\phi(ak)t\\
&=a^{-1}t^{-1}\phi(aka^{-1})\phi(a)t\\
&=k\cdot a^{-1}t^{-1}\phi(a)t
\end{align*}
Therefore, $\langle W, k\rangle\cong\mathbb{Z}\times\mathbb{Z}$ as required.
\end{proof}

We now prove Theorem \ref{thm:hyperbolicity}.

\begin{proof}[Proof of Theorem~\ref{thm:hyperbolicity}]
By assumption, $G=H\ast_{(K, \phi)}$ is hyperbolic and non-residually finite, and $K\lneq N_H(K)$.
Suppose that $K$ is infinite. Then $K$ is an infinite torsion group by Theorem \ref{thm:hyperbolicityZxZ}.
Now, as $K\leq G$ with $G$ hyperbolic, this is a contradiction \cite{gromov1987hyperbolic}. Hence, $K$ is finite.

Suppose that $H$ is residually finite. As $K$ is finite we have that $G$ is residually finite \cite[Theorem 3.1]{BaumslagTretkoff}, a contradiction. Hence, $H$ is non-residually finite.

Finally, note that $H$ is a quasi-convex subgroup of $G$ as $K$ and $\phi(K)$ are finite. Hence, $H$ is hyperbolic \cite[Proposition III.$\Gamma$.3.7]{bridson1999metric}.
\end{proof}

\bibliographystyle{amsalpha}
\bibliography{BibTexBibliography}
\end{document}